\documentclass{article}

\usepackage{amssymb,amsmath,latexsym,amsthm}

\newcommand{\Q}{\mathbb{Q}}
\newcommand{\QQ}{\mathcal{Q}}
\newcommand{\Z}{\mathbb{Z}}
\newcommand{\RR}{\mathcal{R}}
\newcommand{\N}{\mathbb{N}}
\newcommand{\T}{\mathbb{T}}
\newcommand{\G}{G_\Q}
\newcommand{\OO}{\mathcal{O}}
\newcommand{\im}{\mathrm{Im}}
\newcommand{\new}{\mathrm{new}}
\newcommand{\Frob}{{\rm Frob }}
\newcommand{\trace}{{\rm trace}}

\newtheorem{lemma}{Lemma}
\newtheorem{corollary}{Corollary}
\newtheorem{proposition}{Proposition}
\newtheorem{definition}{Definition}
\newtheorem{theorem}{Theorem}

\begin{document}
\def\refname{\centerline{Bibliography}}

\title{Congruences between modular forms and lowering the level mod $\ell^n$}

\author{
Luis Dieulefait\\
{\small{
Dept. d'\`{A}lgebra i Geometria, Universitat de Barcelona}}\\
{\small{Gran Via de les Corts Catalanes 585 -- 08007 Barcelona, Catalonia, Spain}}\\
{\small{e-mail: ldieulefait@ub.edu}}\\
{\small{\texttt{http://atlas.mat.ub.es/personals/dieulefait}}}
\bigskip\\
Xavier Taix\'{e}s i Ventosa\\
{\small{Institut f\"ur Experimentelle Mathematik, Universit\"at
Duisburg-Essen}}\\
{\small{Ellernstra{\ss}e 29 -- 45326 Essen, Germany}}\\
{\small{e-mail: xavier@iem.uni-due.de}}\\
{\small{\texttt{http://www.exp-math.uni-essen.de/\textasciitilde
xavier}}}}

\maketitle

\begin{abstract}
In this article we study the behavior of inertia groups for
modular Galois mod $\ell^n$ representations and in some cases we give a
generalization of Ribet's lowering the level result (cf. \cite{Ribet1990}).
\end{abstract}

\bigskip
\section{Introduction}
Let $f=q+\sum_{2}^\infty a_iq^i$ and $g=q+\sum_{2}^\infty b_iq^i$ be
two newforms of weight $2$, trivial Nebentypus character and level
$N_f$ and $N_g$ respectively. Let $K_f$ and $K_g$ be the fields
generated by the coefficients of $f$ and $g$, and let $K$ be their
composite field. We denote by $\OO_f$, $\OO_g$ and $\OO$ their rings
of integers. Let $\ell>2$ be a prime and let $\rho_f$ (resp.
$\rho_g$) be the $2$-dimensional $\ell$-adic representation
associated to $f$ (resp. $g$), with values in
$\OO_{f,\ell}:=\OO_f\otimes\Z_\ell$ (resp. $\OO_{g,\ell}$).

Recall that the representation $\rho_f$ ramifies exactly at the
primes in the level of $N_f$ and at $\ell$. For any unramified prime
$t$, the  image of the arithmetic Frobenius $\Frob \; t$ has $\trace
(\rho_f (\Frob \; t)) = a_t$, the Fourier coefficient and $t$-th
Hecke eigenvalue of $f$. Also, the determinant of $\rho_f$ is the
$\ell$-adic cyclotomic character $\chi$.

For a given integer $n$, we use the projection
\[
\OO_{f,\ell}\rightarrow \OO_{f,\ell}/\ell^n\OO_{f,\ell}
\]
and we semi-simplify to obtain the mod $\ell^n$ representation
\[
\overline\rho_{f,\ell^n}:\G\rightarrow
GL_2(\OO_{f,\ell}/\ell^n\OO_{f,\ell}).
\]

Using the decomposition of $\ell$ in $K_f$,
$\ell=\lambda_1^{e_1}\cdot\ldots\cdot\lambda_k^{e_k}$ and the
projection
\[
\prod\OO_{f,\lambda_i}/\lambda_i^{e_i
n}\OO_{f,\lambda_i}\rightarrow\OO_{f,\lambda_i}/\lambda_i^{n}\OO_{f,\lambda_i}
\]
we obtain the mod $\lambda^n$ representation attached $f$ for a
fixed place $\lambda\mid\ell$ in $K_f$
\[
\overline\rho_{f,\lambda^n}:\G\rightarrow
GL_2(\OO_{f,\lambda}/\lambda^n\OO_{f,\lambda}).
\]

Let us fix a place $\lambda\mid\ell$ in $K$ and let us denote also
by $\lambda$ its restrictions to $K_f$ and to $K_g$.

From now on let us assume that the mod $\lambda$ representation
$\overline\rho_{f,\lambda}$  is irreducible (then
$\overline\rho_{f,\lambda}$ is odd and absolutely irreducible), that
$N_g \mid N_f$ and that $\ell \nmid N_f$.

If we take the ideal $\lambda^n\subset\OO$ and the projection
\[
\pi:\OO\rightarrow\OO/\lambda^n,
\]
then we say that two numbers $\alpha\in\OO_f$ and $\beta\in\OO_g$
are congruent modulo $\lambda^n$ if $\pi(\alpha)=\pi(\beta)$.

\begin{definition}
$f$ and $g$ are \textbf{congruent} modulo $\lambda^n$ if $a_p\equiv
b_p \mod\lambda^n$ for almost every prime $p$.
\end{definition}

In fact, this is equivalent to say that their associated mod
$\lambda^n$ Galois representations are isomorphic.

\begin{theorem}
$f\equiv g\mod\lambda^n \Longleftrightarrow
\overline\rho_{f,\lambda^n}\sim\overline\rho_{g,\lambda^n}$.
\end{theorem}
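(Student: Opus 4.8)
The plan is to prove both implications using the Chebotarev density theorem together with the standard characterization of a semisimple Galois representation by its traces. For the forward direction, suppose $f \equiv g \bmod \lambda^n$, so that $a_p \equiv b_p \bmod \lambda^n$ for almost every prime $p$. First I would observe that both $\overline\rho_{f,\lambda^n}$ and $\overline\rho_{g,\lambda^n}$ are, by construction, semisimple representations of $\G$ into $GL_2(\OO/\lambda^n)$ (we semi-simplified when passing to the quotient), and that they are unramified outside $\ell N_f$. For any prime $p \nmid \ell N_f$ the trace of Frobenius is $\trace(\overline\rho_{f,\lambda^n}(\Frob\,p)) = a_p \bmod \lambda^n$, and similarly $b_p \bmod \lambda^n$ for $g$; the determinant in both cases is the mod $\lambda^n$ cyclotomic character $\chi$, hence they agree. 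So the two representations have equal characteristic polynomials of Frobenius at all but finitely many primes, and in particular (removing also the finitely many exceptional primes where the congruence $a_p \equiv b_p$ might fail) at a set of primes of density one.

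The key technical point — and the step I expect to be the main obstacle — is passing from "equal traces on a density-one set of Frobenius elements" to "isomorphic representations" when the coefficient ring $\OO/\lambda^n$ is not a field but an Artinian local ring (it has nilpotents when $n > 1$). Over a field this is the classical Brauer--Nesbitt plus Chebotarev argument. Over $\OO/\lambda^n$ one needs a version of the Brauer--Nesbitt theorem for representations over local Artinian rings, valid here because the residual representation $\overline\rho_{f,\lambda}$ is (absolutely) irreducible: I would invoke the result (in the style of Carayol, or the Brauer--Nesbitt theorem for complete local rings) that a semisimple representation with absolutely irreducible residual representation is determined up to isomorphism by its trace function. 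Concretely, one shows that the $\OO/\lambda^n$-subalgebra of $M_2(\OO/\lambda^n)$ generated by the image is the full matrix algebra (by Nakayama, since this holds modulo $\lambda$ by absolute irreducibility), and then the trace function, which by Chebotarev is determined by its values on Frobenius elements, determines the representation up to conjugation. Applying this to $\overline\rho_{f,\lambda^n}$ and $\overline\rho_{g,\lambda^n}$, whose traces agree on a density-one set and hence everywhere, yields $\overline\rho_{f,\lambda^n} \sim \overline\rho_{g,\lambda^n}$.

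For the converse, if $\overline\rho_{f,\lambda^n} \sim \overline\rho_{g,\lambda^n}$, then in particular their traces on Frobenius elements at primes $p \nmid \ell N_f$ coincide, which gives $a_p \equiv b_p \bmod \lambda^n$ for all such $p$, hence for almost every prime; this is exactly the statement that $f \equiv g \bmod \lambda^n$. This direction is essentially immediate. I would present the whole argument by first stating the Brauer--Nesbitt/Carayol lemma over Artinian local rings as the one nontrivial input, then deducing the theorem in a few lines; the only care needed is to track that the residual irreducibility hypothesis (assumed in the excerpt) is what makes the trace determine the isomorphism class, and that the finitely many bad primes are harmless because Chebotarev only needs a density-one set.
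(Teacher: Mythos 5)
Your proposal is correct and follows essentially the same route as the paper: Chebotarev's density theorem combined with the fact that, thanks to the residual (absolute) irreducibility, the traces of Frobenius determine the mod $\lambda^n$ representation up to isomorphism, with the converse being immediate. You are in fact more explicit than the paper, which simply cites Brauer--Nesbitt, in pointing out that for $n>1$ one needs the Carayol-style version of this statement over the Artinian local ring $\OO/\lambda^n$, justified via Nakayama from irreducibility mod $\lambda$.
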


This is just an automatic consequence of Cebotarev's density theorem
since we are assuming that the traces of the images of almost all
Frobenius elements are congruent to each other, and the
Brauer-Nesbitt theorem guarantees that these elements determine the
representation for $\ell>2$. Observe that we do not have to consider
the semi-simplifications of the mod $\lambda^n$ representations
since we are assuming that they are irreducible.

Given a representation $\rho$, let $n_{\rho,p}$ be the conductor of
$\rho$ in the prime $p$. In \cite{Carayol1989}, Carayol studies for
a given mod $\ell$ representation, how much the conductor of a
deformation can increase. He proves the following result.

\begin{proposition}
Let $N=p_1^{n_{p_1}}\ldots p_k^{n_{p_k}}$ and
$\overline{N}=p_1^{\overline{n}_{p_1}}\ldots
p_k^{\overline{n}_{p_k}}$ be the conductors of a $\lambda$-adic
representation $\rho$ and the corresponding  mod $\lambda$
representation $\overline{\rho}_\lambda$, respectively.  Let $p$ be
a prime dividing $N$, $p \neq \ell$, and suppose $\rho$ is such that
$n_p>\overline{n}_p$. Then locally at $p$ $\rho$ is of one of the
following types
\begin{enumerate}
\item $\rho_p=\mu\oplus v$, with $n_{\mu,p}=1$ and
$n_{\overline\mu,p}=0$, and then $n_p=n_{v,p}+1$
\item $\rho_p=\mu\otimes sp(2)$, with $n_{\mu,p}=0$, and then $n_p=1$.
\item $\rho_p=\mu\otimes sp(2)$, with $n_{\mu,p}=1$ and
$n_{\overline\mu,p}=0$, and then $n_p=2$.
\item The  irreducible case in which $n_p=2$.
\end{enumerate}
\end{proposition}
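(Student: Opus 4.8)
The plan is to localise at $p$ and run a short trichotomy on the local shape of $\rho$, using the Weil--Deligne formalism together with the elementary observation that, because $p\neq\ell$, reduction mod $\lambda$ alters almost none of the local ramification data. Write $\rho_p:=\rho|_{D_p}$ and $\overline\rho_p:=\overline\rho_\lambda|_{D_p}$, where $D_p\subset\G$ is a decomposition group at $p$ with inertia $I_p$, wild inertia $P_p$ and Weil subgroup $W_p$ (since $\overline\rho_\lambda$ is taken semisimple, all conductors below may be read off composition factors). By Grothendieck's $\ell$-adic monodromy theorem ($p\neq\ell$) there is a Weil--Deligne representation $(r,N)$ attached to $\rho_p$ on the $2$-dimensional space $V$, with $r|_{I_p}$ of finite image and $N$ nilpotent; for $\rho$ arising from a modular form (the setting in which the Proposition is applied) local--global compatibility lets us take $r$ semisimple, so that $\rho_p$ is of exactly one of the following types: (a) $r$ irreducible and $N=0$; (b) $r=\mu_1\oplus\mu_2$ and $N=0$; (c) $\rho_p=\mu\otimes sp(2)$, i.e.\ $r^{ss}=\mu\oplus\mu'$ with $\mu'\mu^{-1}$ unramified and $N\neq0$. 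Then I would record the conductor formula for a Weil--Deligne representation on a $2$-dimensional space,
\[
n_p=a(r,N)=\mathrm{Sw}(r)+2-\dim\bigl(\ker N\cap V^{I_p}\bigr),
\]
with $\mathrm{Sw}$ the Swan conductor and $V^{I_p}$ the $r(I_p)$-invariants, noting that $n_p\ge 1$ since $p\mid N$.

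Next I would pin down what reduction mod $\lambda$ does. Since the kernel of $GL_2(\OO_\lambda)\to GL_2(\OO/\lambda)$ is pro-$\ell$ and $p\neq\ell$, reduction is injective on $\rho(P_p)$ (a finite group of order prime to $\ell$) and on the prime-to-$\ell$ part of $\rho(I_p)$. This gives three facts: (i) $\mathrm{Sw}(\rho_p)=\mathrm{Sw}(\overline\rho_p)$; (ii) for any character $\psi$ occurring in $r$ one has $a(\overline\psi)=a(\psi)$ unless $a(\psi)=1$, in which case $a(\overline\psi)\in\{0,1\}$ — that is, a character conductor can drop under reduction only from $1$ to $0$, and only when $\psi|_{I_p}$ is tame, nontrivial and valued in $1+\lambda$; (iii) $\dim V^{P_p}$ is unchanged under reduction, since invariants under the $\ell'$-group $\overline\rho(P_p)$ commute with reduction mod $\lambda$. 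These are the only places where the hypotheses $p\neq\ell$ and $\ell>2$ enter, and making (i)--(iii) precise is the main technical point of the proof.

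With these in hand the trichotomy is quick. In case (a), $V^{I_p}$ is $W_p$-stable of dimension $0$ (dimension $1$ would make $r$ reducible, dimension $2$ abelian), so $n_p=\mathrm{Sw}(r)+2$; and $V^{P_p}$, also $W_p$-stable, is $0$ or $V$. If $V^{P_p}=0$ then by (iii) the same holds for $\overline\rho_p$, hence $V^{I_p}_{\overline\rho_p}=0$, and by (i) $\overline n_p=\mathrm{Sw}(\overline\rho_p)+2=\mathrm{Sw}(\rho_p)+2=n_p$, contradicting $n_p>\overline n_p$; therefore $\mathrm{Sw}(r)=0$ and $n_p=2$, which is case 4. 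In case (b), $n_p=a(\mu_1)+a(\mu_2)$; since $a(\overline\mu_i)\le a(\mu_i)$ with equality unless $a(\mu_i)=1$ by (ii), the hypothesis $n_p>\overline n_p$ forces some $\mu_i=:\mu$ to satisfy $a(\mu)=1$ and $a(\overline\mu)=0$, and then, with $v:=\mu_2$, $n_p=1+a(v)=n_{v,p}+1$, which is case 1. In case (c) the conductor formula gives $n_p=1$ when $\mu$ is unramified (case 2) and $n_p=2\,a(\mu)$ when $\mu$ is ramified; in the latter situation $\overline n_p=2\,a(\overline\mu)$, so $n_p>\overline n_p$ forces $a(\mu)=1$ and $a(\overline\mu)=0$, hence $n_p=2$, which is case 3. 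I expect the work to lie entirely in justifying (i)--(iii) above and, inside case (a), in ruling out wildly ramified supercuspidals (those would have $n_p>2$) — which is exactly what the argument through $V^{P_p}$ achieves.
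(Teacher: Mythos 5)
The paper does not actually prove this Proposition: it is quoted verbatim from Carayol \cite{Carayol1989}, so there is no internal proof to compare yours against. Judged on its own terms, your sketch is essentially a correct reconstruction of the standard argument (it is close in spirit to Carayol's and Livn\'e's original treatment): the trichotomy on the Weil--Deligne type, the conductor formula $a(r,N)=\mathrm{Sw}(r)+2-\dim(\ker N\cap V^{I_p})$, and the three reduction facts (invariance of the Swan conductor, invariance of $\dim V^{P_p}$, and the fact that a character conductor can only drop from $1$ to $0$) are exactly the right inputs, and your case analysis does recover items 1--4, including ruling out wildly ramified supercuspidals via $V^{P_p}$. Two small points to tighten. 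First, the parenthetical ``since $\overline\rho_\lambda$ is taken semisimple, all conductors below may be read off composition factors'' is not quite right: the global semisimplification need not be semisimple locally at $p$, so $\overline{n}_p$ is the Artin conductor of the actual local reduction, which can exceed the sum over its Jordan--H\"older factors. Your argument survives because in every case you only need the lower bound $\overline{n}_p\ge$ (conductor of the local semisimplification), which holds since Swan conductors are additive and inertia invariants can only shrink in extensions; but the equalities you write in cases (b) and (c) (e.g.\ $\overline{n}_p=2\,a(\overline\mu)$) should be replaced by this inequality, or justified by noting that when no character drops both local factors are ramified, forcing $\overline{V}^{I_p}=0$. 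Second, invoking local--global compatibility to make $r$ Frobenius-semisimple is unnecessary: since $p\neq\ell$, Grothendieck's monodromy theorem already gives $r(I_p)$ finite, so $r|_{I_p}$ is semisimple and the conductor computations (and the exhaustiveness of cases (a)--(c), as $N\neq0$ forces the $sp(2)$ shape) go through without that extra hypothesis; this also keeps the statement valid for a general $\lambda$-adic $\rho$ as in the Proposition, not only for modular ones.
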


In our case, since we are working without nebentypus, the first case
reduces to $\rho_p=\mu\oplus\mu^{-1}$ and then
$n_p=n_{v,p}+1=n_{\mu,p}+1=2$. Since in all the cases $n_p\leq 2$ we
get the following Corollary.

\begin{corollary}
If $f$ and $g$ are congruent mod $\lambda$ with $N_g \mid N_f$, then
for any prime $p$ dividing $N_f$ but not dividing $\ell N_g$, $p^3
\nmid N_f$.
\end{corollary}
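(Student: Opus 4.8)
\emph{Proof proposal.} The plan is to combine Theorem~1 with the Proposition of Carayol, applied to the $\lambda$-adic representation $\rho_f$. Since $f$ and $g$ are congruent mod $\lambda$, Theorem~1 gives $\overline\rho_{f,\lambda}\sim\overline\rho_{g,\lambda}$. Fix a prime $p\mid N_f$ with $p\nmid\ell N_g$; in particular $p\neq\ell$ and $p\nmid N_g$. I will show that the exact power of $p$ dividing $N_f$ is at most $p^2$.

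Next I would record the two local facts needed. Because $p\nmid N_g$, the $\lambda$-adic representation $\rho_g$ is unramified at $p$ (its conductor exponent at $p$ equals the exponent of $p$ in $N_g$, which is $0$, by the conductor--level compatibility for Galois representations attached to newforms); hence its reduction $\overline\rho_{g,\lambda}$ is unramified at $p$ as well, and therefore so is $\overline\rho_{f,\lambda}$ by the isomorphism above. Thus $\overline{n}_p:=n_{\overline\rho_{f,\lambda},p}=0$. On the other hand, because $p\mid N_f$ we have $n_p:=n_{\rho_f,p}\geq 1$, so $n_p>\overline{n}_p$.

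Now apply the Proposition with $\rho=\rho_f$ at the prime $p\neq\ell$: locally at $p$, $\rho_f$ falls into one of the four listed types. Since $f$ has trivial Nebentypus, case (1) becomes $\rho_p=\mu\oplus\mu^{-1}$ with $n_{\mu,p}=1$, giving $n_p=n_{\mu^{-1},p}+1=2$; cases (2), (3), (4) give $n_p=1$, $2$, $2$ respectively. Hence in every case $n_p\leq 2$, which is exactly the assertion $p^3\nmid N_f$.

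I do not expect a genuine obstacle here: the statement is essentially a repackaging of Carayol's Proposition together with the elementary observation that a congruence mod $\lambda$ forces the residual representation to be unramified wherever $\rho_g$ is, hence away from $\ell N_g$. The only points requiring a word of care are the identification of the local conductor exponent of $\rho_f$ (resp.\ $\rho_g$) with the exponent of $p$ in $N_f$ (resp.\ $N_g$) --- the classical conductor--level compatibility --- and the trivial remark that reduction mod $\lambda$ cannot introduce ramification at a prime where the $\lambda$-adic representation was already unramified, since inertia then acts trivially already over $\OO_{g,\lambda}$.
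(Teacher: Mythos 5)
Your argument is correct and follows the paper's own route: the paper obtains the Corollary directly from Carayol's Proposition by noting that trivial nebentypus reduces case (1) to $\mu\oplus\mu^{-1}$ with $n_p=2$, and that all four cases give $n_p\leq 2$. You simply make explicit the preliminary observations (residual unramifiedness at $p$ from $p\nmid\ell N_g$, conductor--level compatibility, and $n_p>\overline{n}_p$) that the paper leaves implicit.
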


More specifically, if we fix a mod $\lambda$ representation
$\overline\rho$ of conductor $\overline N$, the level of all the
modular deformations of $\overline\rho$ with trivial character,
unramified outside $p\overline N$ for a prime $p\nmid\ell\overline
N$ and minimal at $\overline N$, divides $N=p^2\overline N$.

In the following section we state the main results of this article.
They describe, under certain conditions, how the inertia group of
the Galois representations discussed above behave. In the next
section we introduce Taylor-Wiles' Theorem as we need it for our
proof, which will be given in $\S\ref{Seccio proof}$. Finally we
discuss possible developments of this work.

\bigskip \textbf{Acknowledgments}: the second named author wants to
thank  G. B\"{o}ckle for interesting conversations and also Prof. G.
Frey and G. Wiese for their several helpful corrections and remarks.

\section{Main results}
When we have two newforms $f$ and $g$ as in the previous section,
such that they are congruent mod $\lambda$, $N_g \mid N_f$, and $g$
is minimal in the sense that the conductor $\overline N$ of the
residual representations
$\overline\rho_{f,\lambda}\sim\overline\rho_{g,\lambda}$  equals
$N_g$, we ask ourselves the following two related questions: Which
is the biggest $n$ such that $f$ and $g$ are congruent modulo
$\lambda^n$? Once this value of $n$ is known, is there a reason that
explains why $f$ and $g$ are not congruent anymore mod
$\lambda^{n+1}$?

In \cite{Taixes2008} we give an algorithm that answers the first
question for every possible $\lambda$. Theorem \ref{teorema} below
is a result that answers the second question in some cases.

\begin{definition}
Let $L=\Q(\sqrt{(-1)^{(\ell-1)/2}\ell})$. Then
$\overline\rho_{g,\lambda}$ is strongly irreducible if
$\overline\rho_{g,\lambda}|_{G_L}$ is irreducible.
\end{definition}

\begin{proposition}\label{strong irreducible}
Let $\ell>3$. Then $\overline\rho_{g,\lambda}$ is strongly
irreducible.
\end{proposition}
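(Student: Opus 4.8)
The plan is to argue by contradiction and to reduce strong irreducibility to an impossible local statement at $\ell$. Suppose $\overline\rho_{g,\lambda}$ is irreducible but $\overline\rho_{g,\lambda}|_{G_L}$ is reducible. Since $[\G:G_L]=2$ is prime to $\ell$, the restriction $\overline\rho_{g,\lambda}|_{G_L}$ is semisimple, and Clifford theory then produces a character $\psi$ of $G_L$ with $\psi\neq\psi^\sigma$ (where $\sigma$ generates $\mathrm{Gal}(L/\Q)$) and $\overline\rho_{g,\lambda}\cong\mathrm{Ind}_{G_L}^{\G}\psi$; equivalently, writing $\varepsilon_L\colon\G\to\{\pm1\}$ for the quadratic character cut out by $L$, we get the self-twist isomorphism $\overline\rho_{g,\lambda}\otimes\varepsilon_L\cong\overline\rho_{g,\lambda}$. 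The first task is to pin down $\varepsilon_L$ locally at $\ell$: as $L=\Q(\sqrt{(-1)^{(\ell-1)/2}\ell})$ is the unique quadratic subfield of $\Q(\zeta_\ell)$, it is ramified only at $\ell$, and $\varepsilon_L$ is the $(\ell-1)/2$-th power of the mod $\ell$ cyclotomic character. Since $\ell$ is totally ramified in $\Q(\zeta_\ell)$, the restriction of the mod $\ell$ cyclotomic character to an inertia group $I_\ell$ at $\ell$ is the level-$1$ fundamental character $\omega$, of order exactly $\ell-1$, so $\varepsilon_L|_{I_\ell}=\omega^{(\ell-1)/2}$ has order exactly $2$.

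Next I would feed in the hypotheses $N_g\mid N_f$ and $\ell\nmid N_f$, which give $\ell\nmid N_g$; hence $\rho_g$ is crystalline at $\ell$ with Hodge--Tate weights $\{0,1\}$, so $\overline\rho_{g,\lambda}$ is finite flat over $\Z_\ell$. By Raynaud's classification of finite flat group schemes (equivalently, Serre's weight recipe for weight $2$) the semisimplification of $\overline\rho_{g,\lambda}|_{I_\ell}$ has exactly one of two shapes: the \emph{ordinary} one $\omega\oplus 1$, or the \emph{supersingular} one $\psi_2\oplus\psi_2^\ell$, where $\psi_2$ is a fundamental character of level $2$, of order $\ell^2-1$, with $\psi_2^{\ell+1}=\omega$. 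Restricting the self-twist isomorphism to $I_\ell$ and semisimplifying, the unordered pair of inertial characters must be invariant under multiplication by $\varepsilon_L|_{I_\ell}=\omega^{(\ell-1)/2}$; since this twist has order $2$ it cannot fix either character, so it must interchange the two.

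It then remains to see that the swap is incompatible with either shape once $\ell>3$. In the ordinary case it forces $\omega\cdot\omega^{(\ell-1)/2}=1$, i.e.\ $\omega^{(\ell+1)/2}=1$, i.e.\ $(\ell-1)\mid(\ell+1)/2$, which is impossible for $\ell>3$ because $0<(\ell+1)/2<\ell-1$. In the supersingular case it forces $\psi_2\cdot\omega^{(\ell-1)/2}=\psi_2^\ell$, i.e.\ $\psi_2^{\ell-1}=\omega^{(\ell-1)/2}=\psi_2^{(\ell^2-1)/2}$; comparing exponents modulo $\ell^2-1$ gives $\ell-1\equiv\tfrac{\ell^2-1}{2}\pmod{\ell^2-1}$, i.e.\ $(\ell+1)\mid\tfrac{\ell-1}{2}$, which is impossible because $0<\tfrac{\ell-1}{2}<\ell+1$. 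Either way we reach a contradiction, so $\overline\rho_{g,\lambda}|_{G_L}$ is irreducible. The one step that really requires care --- the main obstacle --- is the local input: one must be sure that for a weight-$2$ newform of level prime to $\ell$ the restriction $\overline\rho_{g,\lambda}|_{I_\ell}$ has, up to semisimplification, exactly the two shapes above with those exponents; granting this, the remainder is the elementary exponent bookkeeping indicated here.
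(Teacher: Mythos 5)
Your proof is correct and follows essentially the same route as the paper: the paper observes that $\ell\nmid N_g$ and weight $2$ give Serre weight $2$, hence precise knowledge of $\overline\rho_{g,\lambda}|_{I_\ell}$, and cites Ribet [Rib97] for the fact that this rules out the dihedral (self-twist by $\varepsilon_L$) case when $\ell>3$. You have simply written out in full the argument the paper delegates to Ribet --- the induction/self-twist reduction, the shapes $\omega\oplus1$ and $\psi_2\oplus\psi_2^{\ell}$ on inertia, and the exponent bookkeeping --- and your computations are accurate, including the fact that only the ordinary case degenerates at $\ell=3$.
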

\begin{proof}
Assuming that $\overline\rho_{g,\lambda}$ is irreducible as in our
case, if $g$ is a newform of weight $2$ and $\ell$ does not divide
its level, clearly the residual representation
$\overline\rho_{g,\lambda}$ has Serre's weight $2$. Thus, this gives
a precise information of the action of inertia at $\ell$, and this
is enough to show that $\overline\rho_{g,\lambda}|_{G_L}$ is
irreducible if $\ell >3$. This is proved in \cite{Ribet} as part
of the proof that the dihedral case can not occur for semistable
weight $2$ representations.
\end{proof}

Let us remark that the condition of $\overline\rho|_{G_L}$ being
irreducible for $\ell = 3$ is easily checked just by finding a prime
$p\equiv 2\pmod3$ such that $b_p \not\equiv 0\pmod\lambda$,
$\lambda\mid3$, or equivalently, such that
$\mathrm{Norm}(b_p)\not\equiv0\pmod{3}$ (where $b_p$ is the
$p$-coefficient of $g$).

\begin{theorem}\label{teorema}
Let $\ell,p\nmid N_g,\ \ell>2$ be two different prime numbers. Let
$f$ be in $S_2(p^kN_g)$, $k\ge1$, and let $g\in S_2(N_g)$ be minimal
with respect to $\lambda$ in the sense defined above. Both cusp
forms are assumed to have trivial nebentypus. Suppose that
$\overline\rho_{f,\lambda}\sim\overline\rho_{g,\lambda}$ and they
are irreducible, and assume that for any other $h\in S_2(N_g),\
\overline\rho_{g,\lambda}\not\sim\overline\rho_{h,\lambda}$. If
$\ell=3$, let $L=\Q(\sqrt{-3})$ and suppose that
$\overline\rho_{g,\lambda}|_{G_L}$ is irreducible. Then,
\[
m:=\min\{n\in
\N:\overline\rho_{f,\lambda^n}\not\sim\overline\rho_{g,\lambda^n}\}
 = \min\{n\in \N:\overline\rho_{f,\lambda^n}|_{I_p}\not\sim\overline\rho_{g,\lambda^n}|_{I_p}\}.
\]
\end{theorem}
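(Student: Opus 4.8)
The plan is to establish the two inequalities between the minima separately, writing $m$ (as in the statement) for the left-hand side and $m'$ for the right-hand one. The inequality $m\le m'$ is immediate: if $\overline\rho_{f,\lambda^n}\sim\overline\rho_{g,\lambda^n}$ globally, then certainly $\overline\rho_{f,\lambda^n}|_{I_p}\sim\overline\rho_{g,\lambda^n}|_{I_p}$, so the set of $n$ for which the restrictions to $I_p$ differ is contained in the set of $n$ for which the global representations differ, and taking minima gives $m\le m'$. For the reverse inequality I would first note that, since $p\nmid N_g$, the representation $\rho_g$ and hence each $\overline\rho_{g,\lambda^n}$ is unramified at $p$; consequently, for $n<m'$ the condition $\overline\rho_{f,\lambda^n}|_{I_p}\sim\overline\rho_{g,\lambda^n}|_{I_p}$ is equivalent to $\overline\rho_{f,\lambda^n}$ being unramified at $p$. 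It therefore suffices to prove, by induction on $n$, the lowering-the-level-modulo-$\lambda^n$ statement: if $\overline\rho_{f,\lambda^{n-1}}\sim\overline\rho_{g,\lambda^{n-1}}$ and $\overline\rho_{f,\lambda^n}$ is unramified at $p$, then $\overline\rho_{f,\lambda^n}\sim\overline\rho_{g,\lambda^n}$; this yields $m'\le m$, hence equality. (The base case $n=1$ is the hypothesis $\overline\rho_{f,\lambda}\sim\overline\rho_{g,\lambda}$.)

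The next step is to check that, for such an $n$, $\overline\rho_{f,\lambda^n}$ is a \emph{minimal} modular deformation of $\overline\rho:=\overline\rho_{g,\lambda}\sim\overline\rho_{f,\lambda}$, in the sense required by the Taylor--Wiles machinery. It is unramified outside $N_g\ell$ and, by hypothesis, at $p$. At a prime $q\mid N_g$ (so $q\ne p,\ell$) the conductor is squeezed between that of $\overline\rho$ and that of $\rho_f$: since reduction and semisimplification can only decrease the conductor, $n_{\overline\rho,q}\le n_{\overline\rho_{f,\lambda^n},q}\le n_{\rho_f,q}=v_q(N_f)=v_q(p^kN_g)=v_q(N_g)=n_{\overline\rho,q}$, the last equality being the minimality of $g$; so $\overline\rho_{f,\lambda^n}$ is minimally ramified at $q$. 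Since $\ell\nmid N_f$ and $f$ has weight $2$, $\rho_f|_{G_{\Q_\ell}}$ comes from an $\ell$-divisible group over $\Z_\ell$, so $\overline\rho_{f,\lambda^n}|_{G_{\Q_\ell}}$ comes from a finite flat group scheme and the deformation is flat at $\ell$; and $\det\overline\rho_{f,\lambda^n}\equiv\chi\pmod{\lambda^n}$.

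Now I would apply the Taylor--Wiles theorem as recalled in \S\ref{Seccio proof}. The standing hypotheses --- $\overline\rho$ odd and absolutely irreducible, modular, $\ell>2$, and $\overline\rho|_{G_L}$ absolutely irreducible, which holds automatically for $\ell>3$ by Proposition~\ref{strong irreducible} and is assumed when $\ell=3$ --- are precisely what is needed for the isomorphism $R^{\mathrm{min}}\cong\mathbb{T}^{\mathrm{min}}$ between the universal minimal deformation ring of $\overline\rho$ and the Hecke algebra acting on the relevant localization of $S_2(N_g)$, with $\mathbb{T}^{\mathrm{min}}$ reduced, finite flat over $\Z_\ell$ and a complete intersection. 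The assumption that no other $h\in S_2(N_g)$ satisfies $\overline\rho_{h,\lambda}\sim\overline\rho_{g,\lambda}$ then forces the only system of Hecke eigenvalues in $\mathbb{T}^{\mathrm{min}}$ to be that of $g$, so $\mathbb{T}^{\mathrm{min}}$ is an order in $\OO_{g,\lambda}$ whose associated Galois representation is $\rho_g$. Since $\overline\rho_{f,\lambda^n}$ has been shown to be a minimal deformation, it corresponds to the specialization of this universal deformation along $\mathbb{T}^{\mathrm{min}}\to\OO_{f,\lambda}/\lambda^n$, $T_q\mapsto a_q(f)\bmod\lambda^n$; the goal is to identify this specialization with $\overline\rho_{g,\lambda^n}$, equivalently to show $a_q(f)\equiv b_q(g)\pmod{\lambda^n}$ for almost all $q$, which by the definition of congruence and Theorem~1 is the same as $\overline\rho_{f,\lambda^n}\sim\overline\rho_{g,\lambda^n}$.

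This identification is the heart of the matter and the step I expect to be the main obstacle. It should be carried out by comparing $\mathbb{T}^{\mathrm{min}}$ with the Hecke algebra $\mathbb{T}^{p}$ at level $p^{k}N_g$ --- equivalently, $R^{\mathrm{min}}$ with the larger deformation ring $R^{p}$ that allows ramification at $p$ of the Carayol type exhibited by $\rho_f$ --- which by Taylor--Wiles is again a complete-intersection Hecke algebra, with a surjection $R^{p}\twoheadrightarrow R^{\mathrm{min}}$ whose kernel cuts out the condition of being unramified at $p$. The modular point $\rho_f$ of $R^{p}$ reduces modulo $\lambda^n$ to $\overline\rho_{f,\lambda^n}$; since the latter is unramified at $p$, that reduced point factors through $R^{\mathrm{min}}$, and combining this with the inductive congruence modulo $\lambda^{n-1}$ and the rigidity of the minimal problem forces the reduced point to be the canonical ($\rho_g$-)point. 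Concretely, the content is that the depth of the congruence between $f$ and $g$ is governed exactly by the $\lambda$-adic valuation of the entry describing the ramification of $\rho_f$ at $p$ in its local decomposition at $p$ --- a level-raising (congruence-module, respectively minimal Selmer group) computation --- so that $f\equiv g\pmod{\lambda^n}$ and ``$\overline\rho_{f,\lambda^n}$ unramified at $p$'' hold for exactly the same range of $n$. Pinning down this comparison, together with the bookkeeping needed when $K_f\ne K_g$ or when $\mathbb{T}^{\mathrm{min}}$ is strictly smaller than $\OO_{g,\lambda}$, is where the real work lies.
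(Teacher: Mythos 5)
Your overall skeleton matches the paper's strategy: the easy inequality, the reduction to showing that an unramified-at-$p$ reduction $\overline\rho_{f,\lambda^n}$ must coincide with $\overline\rho_{g,\lambda^n}$, the verification that such a reduction is a minimal flat deformation of $\overline\rho=\overline\rho_{g,\lambda}$ with cyclotomic determinant, and the use of Taylor--Wiles plus the hypothesis that $g$ is the unique form of level $N_g$ lifting $\overline\rho$ (this is exactly Proposition \ref{Conditions}). But the step you yourself flag as ``the heart of the matter'' --- identifying the resulting mod $\lambda^n$ point of the minimal deformation problem with $\overline\rho_{g,\lambda^n}$ --- is not proved: the route you sketch (comparison of the minimal Hecke algebra with the one at level $p^kN_g$, congruence modules/minimal Selmer groups, relating congruence depth to the valuation of the ramification entry) is left as ``where the real work lies.'' As written, the proposal therefore has a genuine gap at the decisive point, and the machinery you propose for closing it is in any case not needed.

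The paper closes the gap with a short rigidity argument (Lemma \ref{Only one}). Because the deformation problem is posed over $\OO_{g,\lambda}$, the map $R_\varnothing\to\OO_{g,\lambda}$ attached to $\rho_{g,\lambda}$ is an $\OO_{g,\lambda}$-algebra homomorphism, so under the uniqueness hypothesis Taylor--Wiles gives $R_\varnothing\cong\T_\varnothing=\OO_{g,\lambda}$ on the nose --- your worry that $\T$ might be a strictly smaller order does not arise in this framework. A discrete valuation ring serving as its own universal deformation ring admits exactly one $\OO_{g,\lambda}$-algebra homomorphism to $\OO_\lambda/\lambda^n\OO_\lambda$ inducing the identity on residue fields, namely reduction; hence there is exactly one deformation of $\overline\rho$ to $\OO_\lambda/\lambda^n\OO_\lambda$ of minimal type, and since $\overline\rho_{g,\lambda^n}$ is one such, your unramified $\overline\rho_{f,\lambda^n}$ must equal it. This makes both your induction on $n$ and the auxiliary comparison with the level-$p^kN_g$ problem dispensable: the paper does introduce the larger deformation problem $\QQ'$ (ramification allowed at $p$, minimal elsewhere), but only to certify that $\overline\rho_{f,\lambda^m}$ satisfies every minimality condition except possibly at $p$ --- the bookkeeping you carried out by hand at the primes $q\mid N_g$ --- so that its failure to be of minimal type is caused exactly by ramification at $p$. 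If you did push through your congruence-module comparison you would be proving a sharper, quantitative statement in the spirit of Corollary \ref{corollary}, but for the theorem itself the rigidity lemma suffices, and without it your argument is a plan rather than a proof.
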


Hence, what we show is that in many cases the cause of the break of
the congruence when increasing the power of $\lambda$ is due
precisely to the non-triviality of the action of the inertia group
at a prime in $N_f/N_g$. Let us remark that this is specific to the
situation we are in, namely when $N_g$ is a proper divisor of $N_f$.
If this were not the case and $N_f=N_g$ were congruent modulo some
$\lambda^n$ (in \cite{Taixes2008} we compute dozens of examples), it
is clear that the reason of not being congruent anymore modulo
$\lambda^{n+1}$ can not be related to ramification at any place.

Theorem \ref{teorema} can be reinterpreted  as a generalization to
higher exponents of Ribet's Lowering the Level result
\cite{Ribet1990}.

\begin{corollary}[Lowering the level modulo $\lambda^n$]
Let $f$ be a newform of weight $2$, trivial character and level $p^k
N$ ($p\nmid N$) such that for a given $\lambda\nmid2 p N$ and an
integer $n$, $\overline\rho_{f,\lambda^n}$ does not ramify at $p$.
Let us suppose that there exists exactly one newform $g$ of weight
$2$ and level $N$ congruent to $f$ modulo $\lambda$ (Ribet's
lowering the level provides \textbf{at least} one) satisfying the
strong irreducibility condition. Then, lowering the level can be
generalized modulo $\lambda^n$, i.e., $f$ and $g$ are congruent also
modulo $\lambda^n$.
\end{corollary}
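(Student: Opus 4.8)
The plan is to deduce the corollary from Theorem \ref{teorema} by matching hypotheses and then invoking the unramifiedness assumption. If $p\nmid N_f$ there is nothing to prove, since then $f$ itself has level $N$ and the uniqueness hypothesis forces $f=g$; so assume $k\ge 1$. Write $\ell$ for the residue characteristic of $\lambda$. Since $\lambda\nmid 2pN$ we have $\ell>2$, $\ell\nmid N$ and $\ell\ne p$, and by hypothesis $\overline\rho_{f,\lambda^n}$, hence a fortiori $\overline\rho_{f,\lambda}$, is unramified at $p$. As $\overline\rho_{f,\lambda}$ is (strongly, hence absolutely) irreducible and $p\ne\ell$, Ribet's level-lowering theorem can be applied at $p$, and stripping the full power $p^k$ from the level produces a weight $2$, trivial character newform of level $N$ — here $N$ is to be read as the prime-to-$\ell$ Artin conductor $\overline N$ of $\overline\rho_{f,\lambda}$, which is prime to $p$ by unramifiedness — whose residual representation is $\overline\rho_{f,\lambda}$. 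By hypothesis this newform is the only one of level $N$ congruent to $f$ modulo $\lambda$; denote it $g$. In particular $N_g=N=\overline N$, so $g$ is minimal with respect to $\lambda$ in the sense of Theorem \ref{teorema}.

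Next I would rewrite the uniqueness assumption in the form the theorem asks for. By the first theorem of the paper, a newform $h$ satisfies $\overline\rho_{h,\lambda}\sim\overline\rho_{g,\lambda}$ exactly when $h\equiv g\equiv f\pmod\lambda$; since $g$ is the unique newform of level $N$ congruent to $f$ modulo $\lambda$, there is no newform $h\ne g$ of level dividing $N_g$ with $\overline\rho_{h,\lambda}\sim\overline\rho_{g,\lambda}$ (one of level equal to $N_g$ would contradict uniqueness, and one of level properly dividing $N_g$ would have residual conductor a proper divisor of $\overline N=N_g$, which is impossible). For $\ell>3$ the strong irreducibility of $\overline\rho_{g,\lambda}$ holds automatically by Proposition \ref{strong irreducible}; for $\ell=3$ it is precisely the assumption made, and $L=\Q(\sqrt{-3})$ is indeed $\Q(\sqrt{(-1)^{(\ell-1)/2}\ell})$. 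Thus all the hypotheses of Theorem \ref{teorema} are in force for this $f,g,p,\ell$.

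Applying Theorem \ref{teorema} (with the running variable renamed $n'$) yields
\[
m:=\min\{n'\in\N:\overline\rho_{f,\lambda^{n'}}\not\sim\overline\rho_{g,\lambda^{n'}}\}=\min\{n'\in\N:\overline\rho_{f,\lambda^{n'}}|_{I_p}\not\sim\overline\rho_{g,\lambda^{n'}}|_{I_p}\}.
\]
Now $\rho_g$ is unramified at $p$ because $p\nmid N_g$, so $\overline\rho_{g,\lambda^{n'}}|_{I_p}$ is trivial for every $n'$; and since $\overline\rho_{f,\lambda^n}$ is unramified at $p$, $\rho_f$ carries $I_p$ into $\mathrm{Id}+\lambda^n M_2(\OO_{f,\lambda})\subseteq\mathrm{Id}+\lambda^{n'}M_2(\OO_{f,\lambda})$, so $\overline\rho_{f,\lambda^{n'}}|_{I_p}$ is trivial as well for every $n'\le n$. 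Hence $\overline\rho_{f,\lambda^{n'}}|_{I_p}\sim\overline\rho_{g,\lambda^{n'}}|_{I_p}$ for all $n'\le n$, so the right-hand set above, and therefore the left-hand set, contains no integer $\le n$. Equivalently $\overline\rho_{f,\lambda^{n'}}\sim\overline\rho_{g,\lambda^{n'}}$ for all $n'\le n$; taking $n'=n$ and applying the first theorem again gives $f\equiv g\pmod{\lambda^n}$, which is the claim.

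The only point requiring real care is the first paragraph: one must check that Ribet's lowering the level really reaches level $N$ at the prime $p$ — equivalently that $N$ is the prime-to-$\ell$ conductor of $\overline\rho_{f,\lambda}$ — because it is this that supplies the minimality of $g$ demanded by Theorem \ref{teorema}. Once that and the translation of the uniqueness hypothesis are in place, the corollary is a direct substitution into the theorem, using only that being unramified at $p$ modulo $\lambda^n$ propagates to all smaller powers of $\lambda$.
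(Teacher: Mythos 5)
The paper gives no explicit proof of this corollary---it is offered as a direct reinterpretation of Theorem \ref{teorema}---and your deduction is exactly the intended route: match the hypotheses of the theorem and then use the unramifiedness of $\overline\rho_{f,\lambda^n}$ at $p$ (together with $p\nmid \ell N_g$, so $\overline\rho_{g,\lambda^{n'}}|_{I_p}$ is trivial) to force $m>n$, hence $\overline\rho_{f,\lambda^n}\sim\overline\rho_{g,\lambda^n}$. The one delicate point you identify, namely that $N$ must be the conductor of $\overline\rho_{f,\lambda}$ so that $g$ is minimal with respect to $\lambda$ and no congruent newform of level strictly dividing $N$ can occur in $S_2(N)$, is a looseness already present in the paper's statement of the corollary (the theorem assumes minimality, the corollary leaves it implicit), and you handle it correctly by making that reading explicit rather than by introducing any new gap.
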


In the previous section we saw that there is no congruence between
two newforms of level $N$ and $p^k N$ if $k>2$. In the case $k=1$,
we can rewrite the Theorem as follows.
\begin{corollary}\label{corollary}
With the same conditions as in Theorem \ref{teorema}, let $k=1$.
Then
\[
\rho_f|_{I_p}=<\left(
              \begin{array}{cc}
                1 & a \\
                0 & 1 \\
              \end{array}
            \right)>
\]
where $v_\ell(a)=m-1$. So, the image of the mod $\lambda^m$
representation of $f$ contains an $\ell$-group.
\end{corollary}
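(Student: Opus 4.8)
The plan is to combine the local structure of $\rho_f$ at $p$ with the identity supplied by Theorem~\ref{teorema}. First I would identify $\rho_f|_{D_p}$. Since $k=1$ and $p\nmid N_g$, the conductor of $\rho_f$ at $p$ is $n_p=1$, while minimality of $g$ forces $\overline\rho_{g,\lambda}$, hence $\overline\rho_{f,\lambda}$, to be unramified at $p$, i.e. $\overline n_p=0$. Thus the Proposition of Carayol recalled above applies, and the only case compatible with $n_p=1$ in the trivial-nebentypus setting is the second one: $\rho_f|_{D_p}=\mu\otimes sp(2)$ with $\mu$ unramified at $p$. As $p\neq\ell$ this representation is tamely ramified, so $\rho_f|_{I_p}$ factors through the procyclic pro-$\ell$ quotient of tame inertia; fixing a topological generator $\sigma$ and an $\OO_{f,\lambda}$-basis of a $D_p$-stable lattice adapted to the filtration, we get $\rho_f(I_p)=\langle\begin{pmatrix}1&a\\0&1\end{pmatrix}\rangle$ with $a\in\OO_{f,\lambda}$, and $a\neq0$ because $sp(2)$ is genuinely ramified. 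A change of $\sigma$ or of basis multiplies $a$ by a unit, so $v_\ell(a)$ is a well-defined invariant.

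Next I would evaluate the two sides of the equality in Theorem~\ref{teorema}. Since $p\nmid N_g$, the representation $\rho_g$ is unramified at $p$, so $\overline\rho_{g,\lambda^n}|_{I_p}$ is trivial for every $n$. On the other hand, because $\overline\rho_{f,\lambda}$ is irreducible no semisimplification intervenes, and $\overline\rho_{f,\lambda^n}|_{I_p}$ is literally the reduction of $\rho_f|_{I_p}$ modulo $\lambda^n$, namely the group generated by $\begin{pmatrix}1&a\bmod\lambda^n\\0&1\end{pmatrix}$; this is trivial exactly when $\lambda^n\mid a$, that is, when $n\le v_\ell(a)$. Hence $\min\{n:\overline\rho_{f,\lambda^n}|_{I_p}\not\sim\overline\rho_{g,\lambda^n}|_{I_p}\}=v_\ell(a)+1$, and Theorem~\ref{teorema} identifies this quantity with $m$. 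Therefore $v_\ell(a)=m-1$, which is precisely the asserted shape of $\rho_f|_{I_p}$. The last claim is then immediate: restricting $\overline\rho_{f,\lambda^m}$ to $I_p$ gives the subgroup generated by $\begin{pmatrix}1&a\bmod\lambda^m\\0&1\end{pmatrix}$, which is nontrivial since $v_\ell(a)=m-1<m$ and consists of unipotent upper-triangular matrices over $\OO/\lambda^m$, hence is an $\ell$-group contained in the image of $\overline\rho_{f,\lambda^m}$.

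Given Theorem~\ref{teorema}, I do not anticipate a genuine difficulty; the step needing care is the first one — that $\rho_f(I_p)$ really is procyclic and can be put in the displayed unipotent form over the integer ring $\OO_{f,\lambda}$ (this uses tameness at $p\neq\ell$, the $\mu\otimes sp(2)$ local structure, and the existence of a $D_p$-stable lattice) — together with the accompanying observation that mod-$\lambda$ irreducibility lets us restrict $\overline\rho_{f,\lambda^n}$ to inertia without semisimplifying; otherwise the unipotent image would disappear after semisimplification and the comparison in Theorem~\ref{teorema} would be vacuous.
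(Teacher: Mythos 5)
Your proof is correct and follows essentially the same route as the paper: you establish the unipotent shape of $\rho_f|_{I_p}$ (the paper simply quotes semistability at $p$, while you derive it from Carayol's classification in the case $k=1$), and then combine the equality of Theorem \ref{teorema} with the triviality of $\overline\rho_{g,\lambda^n}|_{I_p}$ to get $v_\ell(a)=m-1$ and the nontrivial unipotent ($\ell$-group) image mod $\lambda^m$. The extra care you take with semisimplification and the well-definedness of $v_\ell(a)$ only makes explicit what the paper's shorter argument leaves implicit.
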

\begin{proof}
It is well known that if a representation is semi-stable at $p$, the
restriction of $\rho$ on the inertia at $p$ is
\[
\left(
              \begin{array}{cc}
                1 & * \\
                0 & 1 \\
              \end{array}
            \right)
\]
for some $*\ne0$. Since we know that the inertia at $p$ vanishes
modulo $\lambda^{n}$ exactly when $n<m$, then we know that
$*\equiv0\pmod{\lambda^n}$ if and only if $n<m$. Then
$v_\ell(*)=m-1$.
\end{proof}

In \cite{Taixes2008} we computed $10.122$ examples where we can
apply Theorem \ref{teorema}. It is easy to check that most of them
satisfy also the hypothesis of Corollary \ref{corollary}.

In Table \ref{table:examples} we show some of these examples. In
particular, we can see that all but one of them (the one with
$p=13^2$) satisfy also the conditions from Corollary
\ref{corollary}.

We divided the table in $3$ different parts: the first one has some
of the elements with the biggest $\ell$'s that we found. The
greatest one is as big as $1.75\cdot10^{18}$. The next part includes
the elements with a big $p$. Since we worked with elements with
$N\le2000$ and the smallest level appearing is $N=11$, we know that
$p$ can not be bigger than $181$. We have actually precisely one
example with this $p$. Finally, in the last section we have the
couples with the biggest $m$'s. It is remarkable to see that there
is one element with $m=11$.

\begin{table}[!ht]
\[
\begin{array}{c|c|c|c|c|c|c}
N_f               & i & N_g & j & \ell^{m-1}          & p^k    & m \\
\hline
1678 = 2\cdot839  & 8 & 839 & 2 & 1750283935190857471 & 2    & 2\\
1707 = 3\cdot569  & 4 & 569 & 2 & 122272440801294601  & 3    & 2\\
1941 = 3\cdot647  & 4 & 647 & 3 & 5539230441648341    & 3    & 2\\
1839 = 3\cdot613  & 4 & 613 & 3 & 3726338419619653    & 3    & 2\\
1757 = 7\cdot251  & 5 & 251 & 2 & 902088490528867     & 7    & 2\\
1797 = 3\cdot599  & 6 & 599 & 3 & 779881437372101     & 3    & 2\\
1941 = 3\cdot647  & 3 & 647 & 3 & 665741756680589     & 3    & 2\\
1945 = 5\cdot389  & 5 & 389 & 5 & 571255479184807     & 5    & 2\\
1754 = 2\cdot877  & 4 & 877 & 3 & 551522526259063     & 2    & 2\\
1706 = 2\cdot853  & 5 & 853 & 2 & 372293980443053     & 2    & 2\\
1906 = 2\cdot953  & 6 & 953 & 2 & 303408887531093     & 2    & 2\\
1851 = 3\cdot617  & 7 & 617 & 2 & 286866593268389     & 3    & 2\\
\hline
1991 = 11\cdot181 & 4 & 11  & 1 & 27 = 3^3            & 181  & 4\\
1969 = 11\cdot179 & 4 & 11  & 1 & 3                   & 179  & 2\\
1903 = 11\cdot173 & 4 & 11  & 1 & 7                   & 173  & 2\\
1859 = 11\cdot13^2& 8 & 11  & 1 & 3                   & 13^2 & 2\\
1837 = 11\cdot167 & 5 & 11  & 1 & 13                  & 167  & 2\\
\hline
1937 = 13\cdot149 & 4 & 149 & 2 & 59049 = 3^{10}      & 13   & 11\\
1934 = 2\cdot967  & 2 & 967 & 1 & 625 = 5^4           & 2    & 5\\
1929 = 3\cdot643  & 4 & 643 & 2 & 625 = 5^4           & 3    & 5\\
1708 = 2^2\cdot7\cdot61 & 6 & 244 = 2^2\cdot61 & 2 & 81 = 3^4 & 7 & 5\\
1686 = 2\cdot3\cdot281 & 10 & 562 = 2\cdot281 & 4 & 28561 = 13^4 & 3 & 5\\
1643 = 31\cdot53  & 3 & 53  & 2 & 625 = 5^4           & 31   & 5\\
1426 = 2\cdot23\cdot31 & 13 & 713 = 23\cdot31 & 5 & 81 = 3^4 & 2 & 5\\
1401 = 3\cdot467  & 1 & 467 & 2 & 625 = 5^4           & 3    & 5\\
1298 = 2\cdot11\cdot59 & 11 & 649 = 11\cdot59 & 4 & 81 = 3^4 & 2 & 5\\
1158 = 2\cdot3\cdot193 & 13 & 386 = 2\cdot193 & 4 & 625 = 5^4 & 3 & 5\\
1115 = 5\cdot223  & 8 & 223 & 2 & 81 = 3^4            & 5    & 5
\end{array}
\]
\caption{Examples satisfying Theorem \ref{teorema}}
\label{table:examples}
\end{table}

Every pair $(N,i)$ in Table \ref{table:examples} corresponds to the
$i$-th element of the basis of $S_2^{\new}$ sorted with the
\texttt{SortDecomposition} function of Magma \cite{Magma1997}.\\

For any two-dimensional Galois representation $\rho$, let us denote
by $\rho'$ its projectivization. Then we have the following:
\begin{corollary}\label{dihedral}
With the same conditions as in Theorem \ref{teorema}, let $k=1$. Let
us suppose also that $g$ has Complex Multiplication (in this case,
$\im(\rho'_{g,\lambda})$ is a dihedral group). Then the image of
$\rho'_{f,\lambda}$ is not dihedral and the number $m$ of the
Theorem is the smallest one such that the first of the following
inclusions is not an equality:
\[
\mathrm{Dihedral\ group}\subsetneq \overline\rho'_{f,\lambda^m}
\subsetneq PGL_2(\OO_{f,\lambda}/\lambda^m\OO_{f,\lambda}).
\]
\end{corollary}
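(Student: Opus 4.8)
The plan is to reduce the Corollary to the local behaviour of $\rho_f$ at $p$. The main local input I would record is: a two-dimensional representation $\rho$ over a quotient ring $R$ of $\OO_{f,\lambda}$ whose restriction to $I_p$ is generated by a single nontrivial unipotent matrix cannot be induced from a character of $G_{M'}$ for any quadratic field $M'/\Q$. Indeed, if $\rho=\mathrm{Ind}_{G_{M'}}^{\G}\phi$, then by Mackey $\rho|_{I_p}$ is a direct sum of two characters of $I_p$ when $p$ is unramified in $M'$ (so $\rho(I_p)$ lies, after conjugation, in the diagonal torus of $GL_2(R)$), while if $p$ ramifies in $M'$ then $\rho|_{I_p}=\mathrm{Ind}_{I_{M'_\wp}}^{I_p}\bigl(\phi|_{I_{M'_\wp}}\bigr)$ with $I_{M'_\wp}$ of index two in $I_p$, so $\rho(\tau)$ has trace $0$ for every $\tau\in I_p$ not lying in $I_{M'_\wp}$. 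But for $k=1$ Corollary~\ref{corollary} gives $\rho_f|_{I_p}=\langle\bigl(\begin{smallmatrix}1&a\\0&1\end{smallmatrix}\bigr)\rangle$ with $a\in\OO_{f,\lambda}$, $v_\ell(a)=m-1$, and the same shape holds for the image in $GL_2(R)$ for any such $R$. Every element of this group has trace $2$, which is nonzero in $R$ because $\ell\neq 2$, so the ramified case is impossible; and a nonzero unipotent is not conjugate into the diagonal torus — over a field because its only eigenvalue is $1$, and over $R=\OO_{f,\lambda}/\lambda^m\OO_{f,\lambda}$ because $v_\ell(a)=m-1$ makes the image $\bar a$ a nonzero $\lambda$-torsion element, so a hypothetical conjugation reduced modulo $\lambda$ would identify the nonzero nilpotent $\bigl(\begin{smallmatrix}0&\bar a\\0&0\end{smallmatrix}\bigr)$ with a semisimple residue. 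Hence such a $\rho$ is not induced; equivalently, its projective image is not dihedral.

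Applying this with $R=\OO_{f,\lambda}$ I obtain that $\im(\rho'_{f,\lambda})$ is not dihedral, which is the first assertion of the Corollary. Applying it with $R=\OO_{f,\lambda}/\lambda^m\OO_{f,\lambda}$ — where Corollary~\ref{corollary} is precisely the statement that $\rho_f(I_p)$ is still nontrivial, of the above shape, modulo $\lambda^m$ — I obtain that $\overline\rho'_{f,\lambda^m}$ is not dihedral.

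It remains to see that $m$ is the smallest such level and that the second inclusion is strict. For every $n<m$ the definition of $m$ gives $\overline\rho_{f,\lambda^n}\sim\overline\rho_{g,\lambda^n}$, and since $g$ has complex multiplication, say by an imaginary quadratic field $M$, one has $\rho_{g,\lambda}=\mathrm{Ind}_{G_M}^{\G}\psi$ for a Hecke character $\psi$; reducing (no semisimplification is needed, $\overline\rho_{g,\lambda}$ being irreducible) $\overline\rho_{g,\lambda^n}=\mathrm{Ind}_{G_M}^{\G}(\psi\bmod\lambda^n)$ is induced from $G_M$, hence $\overline\rho'_{f,\lambda^n}=\overline\rho'_{g,\lambda^n}$ is dihedral. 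Thus $m$ is exactly the least level at which $\overline\rho'_{f,\lambda^n}$ stops being dihedral, i.e.\ at which the first inclusion in the displayed chain stops being an equality. Moreover, reduction modulo $\lambda^{m-1}$ maps $\overline\rho'_{f,\lambda^m}$ onto $\overline\rho'_{f,\lambda^{m-1}}=\overline\rho'_{g,\lambda^{m-1}}$, a dihedral and therefore proper subgroup of $PGL_2(\OO_{f,\lambda}/\lambda^{m-1}\OO_{f,\lambda})$; since $PGL_2(\OO_{f,\lambda}/\lambda^m\OO_{f,\lambda})$ surjects onto $PGL_2(\OO_{f,\lambda}/\lambda^{m-1}\OO_{f,\lambda})$, the image $\overline\rho'_{f,\lambda^m}$ cannot be the whole group, which is the second strict inclusion; and that same surjection realizes the dihedral group $\overline\rho'_{f,\lambda^{m-1}}$ as a proper quotient of $\overline\rho'_{f,\lambda^m}$, which is the content of the first inclusion.

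The step I expect to be the main obstacle is showing that a nontrivial unipotent cannot be conjugated into the diagonal torus of $GL_2(\OO_{f,\lambda}/\lambda^m\OO_{f,\lambda})$: over this non-reduced ring there are conjugations with no analogue over a field, and excluding them uses crucially the sharp valuation $v_\ell(a)=m-1$ from Corollary~\ref{corollary} (which forces the off-diagonal entry to be nonzero but $\lambda$-torsion) together with a reduction-mod-$\lambda$ comparison of nilpotent against semisimple residues. A secondary point to treat carefully is the Mackey decomposition when $p$ ramifies in the hypothetical CM field, where one must check that the trace-zero element really lies in $I_p$ (not merely in a decomposition group) and invoke $\ell\neq 2$.
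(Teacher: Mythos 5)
Your proposal is correct and follows the same skeleton as the paper's proof: for $n<m$ you use the congruence with the CM form $g$ to see that $\overline\rho'_{f,\lambda^n}=\overline\rho'_{g,\lambda^n}$ is dihedral; at level $m$ you use the nontrivial unipotent image of $I_p$ supplied by Theorem \ref{teorema} and Corollary \ref{corollary}; and the strictness of the second inclusion is obtained, exactly as in the paper, from the surjectivity of $PGL_2(\OO_{f,\lambda}/\lambda^m)\rightarrow PGL_2(\OO_{f,\lambda}/\lambda^{m-1})$ together with the properness of the dihedral image at level $m-1$. The difference is one of substance at the key step: the paper disposes of it in one line, asserting that the inertia element ``can not be contained in a dihedral group'', whereas you actually prove the relevant statement, namely that a representation whose restriction to $I_p$ is generated by a nontrivial unipotent cannot be induced from a quadratic field -- via Mackey in the unramified case (plus the graded-piece argument identifying $\lambda^{m-1}M_2(\OO_{f,\lambda}/\lambda^m)$ with $M_2(\OO_{f,\lambda}/\lambda)$, which correctly rules out conjugating the unipotent into a torus despite the non-reducedness of the ring) and via the trace-zero property on the nontrivial coset, using $\ell\neq2$, in the ramified case. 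This extra care is not cosmetic: read literally as a statement about abstract dihedral subgroups of $PGL_2(\OO_{f,\lambda}/\lambda^m)$, the paper's one-line claim is false, since for instance $\bigl\langle\bigl(\begin{smallmatrix}1&\bar a\\0&1\end{smallmatrix}\bigr),\bigl(\begin{smallmatrix}1&0\\0&-1\end{smallmatrix}\bigr)\bigr\rangle$ is dihedral of order $2\ell$ and contains the unipotent; so ``dihedral image'' must be read as ``image of a representation induced from a quadratic field'' (equivalently, lying in the normalizer of a torus), and that is precisely the reading your argument establishes and the only one compatible with the CM input. The only point you should state explicitly is this convention, since your closing phrase ``not induced; equivalently, its projective image is not dihedral'' is a definition over the Artinian quotient rather than a theorem; with that understood, your proof is a complete and more robust version of the paper's.
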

\begin{proof}
It is clear that for $m-1$,
$\overline\rho'_{g,\lambda^{m-1}}\sim\overline\rho'_{f,\lambda^{m-1}}$,
and since $g$ has CM, $\overline\rho'_{f,\lambda^{m-1}}$ must be a
dihedral group. However, for $m$, since
$\overline\rho'_{f,\lambda^{m}}$ contains an element provided by
Theorem \ref{teorema} which can not be contained in a dihedral
group, it is clear that $\overline\rho'_{f,\lambda^{m}}$ is not a
dihedral group anymore.

For the other inequality it is clear that it is never an equality,
because if it were, $\overline\rho'_{f,\lambda^n}$ would always
equal $PGL_2(\OO_{f,\lambda}/\lambda^{n}\OO_{f,\lambda})$, for every
$n$. And this is impossible, since we know that for $n<m$,
$\overline\rho'_{f,\lambda^n}$ is a dihedral group.
\end{proof}

Let us remark that the conditions in the Theorem are not too
restrictive. For example, just by taking one newform $g$ of level
$N$ with residual mod $\lambda$ representation satisfying the strong
irreducibility condition, minimal with respect to $\lambda$ and not
congruent to any other newform of the same level, using Ribet's
Raising the Level we can find infinitely many examples in which we
can apply our results.

The conditions we are imposing on the pair $(g,\ell)$ are generic in
the following sense: given $g$ they are satisfied for almost every
prime $\ell$. In fact, given $g$ it is well-known that for almost
every prime $\ell$ the representation $\rho_{g,\lambda}$ is
irreducible, as proved by Ribet in \cite{Ribet2} (see also \cite{DV}
for an explicit determination of the finite set of reducible
primes), and as we have already explained the strong irreducibility
condition is automatic if $\ell >3$. It is also well-known that the
number of primes giving congruences between modular forms of fixed
(or bounded) level, called ``congruence primes", is finite: this can
easily be proved by applying Dirichlet's principle (there are only
finitely many cusp forms of bounded level) and the fact that two
newforms that are congruent modulo infinitely many primes must be
equal. Also, the condition of being minimal with respect to
$\lambda$ is equivalent, by Ribet's lowering the level, to the fact
that $g$ is not congruent to some modular form $g'$ of level equal
to a proper divisor of $N$, and so if this condition is not
satisfied $\ell$ has to be a congruence prime and we know that there
are only finitely many of them because the levels of $g$ and $g'$
are both bounded by $N$. We conclude that for any level $N$ there is
constant $C$ such that for any weight $2$ modular form $g$ of level
$N$ and any prime $\ell>C$ the pair $(g,\lambda)$ satisfies the
conditions of the Theorem.

\section{Taylor-Wiles}
To prove Theorem \ref{teorema}, the main result we need is an
extended version of the Taylor-Wiles Theorem. In order to state it,
we have to introduce some notation.

Let $\overline\rho :=\overline\rho_{g,\lambda}$, which we assume to
be strongly irreducible. Let $\Sigma$ be a finite set of prime
numbers. We say that a representation $\rho$ deforming
$\overline\rho$ is of type $\Sigma$ if
\begin{enumerate}
\item $\chi_\ell^{-1}$ det $\rho$ has finite order not divisible
by $\ell$.
\item $\rho$ is minimally ramified outside $\Sigma$.
\item $\rho$ is flat at $\ell$ in the sense of \cite{deShalit} (see also \cite{DaDiTa}).
\end{enumerate}

Let $R_\Sigma$ be the $\OO_{g,\lambda}$-algebra corresponding to the
universal deformation of type $\Sigma$. Let $\Phi_\Sigma$ be the set
of newforms $f$ such that $\rho_{f,\lambda}$ is a deformation of
$\overline\rho$ of type $\Sigma$.

For every $f$ in $\Phi_\Sigma$, consider the map
$R_\Sigma\rightarrow \OO_{f,\lambda}$ corresponding to
$\rho_{f,\lambda}$. We define
$\T_\Sigma\subset\prod_{f\in\Phi_\Sigma}\OO_{f,\lambda}$ as the
image of $R_\Sigma$.

Let $\phi_\Sigma$ be the surjective map
\[
\phi_\Sigma:R_\Sigma\rightarrow\T_\Sigma.
\]

\begin{theorem}[Taylor-Wiles]
Let $\ell$ be an odd prime. If $\ell=3$, let $L=\Q(\sqrt{-3})$ and
suppose $\overline\rho|_{G_L}$ is irreducible. Then $\phi_\Sigma$ is
an isomorphism and $R_\Sigma$ is a complete intersection.
\end{theorem}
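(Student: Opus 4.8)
The plan is to deduce the statement from the numerical isomorphism criterion of Wiles, in the form due to Lenstra, together with the Taylor--Wiles patching construction; this is essentially the original Taylor--Wiles argument, the only point requiring extra care being the flat local condition at $\ell$ built into the definition of type $\Sigma$, for which I would appeal to de Shalit's description of the flat deformation functor (cf. \cite{deShalit}, \cite{DaDiTa}). First, $g$ itself lies in $\Phi_\Sigma$, so there is an augmentation $\pi:\T_\Sigma\to\OO_{g,\lambda}$, and $\T_\Sigma$ is finite and flat over $\OO_{g,\lambda}$, being a sub-$\OO_{g,\lambda}$-algebra of the finite product $\prod_{f\in\Phi_\Sigma}\OO_{f,\lambda}$. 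Let $\wp\subset R_\Sigma$ be the kernel of the composite $R_\Sigma\to\T_\Sigma\to\OO_{g,\lambda}$ classifying $\rho_{g,\lambda}$, and let $\eta\subset\OO_{g,\lambda}$ be the congruence ideal $\pi(\mathrm{Ann}_{\T_\Sigma}\ker\pi)$. Since $\phi_\Sigma$ is surjective one always has $\#(\wp/\wp^2)\ge\#(\OO_{g,\lambda}/\eta)$, and the criterion asserts that $\phi_\Sigma$ is an isomorphism of complete intersections precisely when equality holds; so the whole content is the reverse inequality $\#(\wp/\wp^2)\le\#(\OO_{g,\lambda}/\eta)$.

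To prove this I would run the Taylor--Wiles method. Identify the Pontryagin dual of $\wp/\wp^2$ with the Selmer group $H^1_\Sigma(G_\Q,\mathrm{ad}^0\overline\rho)$ cut out by the local conditions defining type $\Sigma$ (minimal outside $\Sigma$, flat at $\ell$), and let $r$ be its dimension over the residue field. The key step is the construction, for every $m\ge1$, of a set $Q_m$ of exactly $r$ auxiliary primes $q\notin\Sigma\cup\{\ell\}$ with $q\equiv1\pmod{\ell^m}$ and $\overline\rho(\Frob\,q)$ having distinct eigenvalues, chosen so that the dual Selmer group for the enlarged set $\Sigma\cup Q_m$ vanishes; this forces the corresponding deformation ring $R_{Q_m}$ to be a quotient of a power series ring $\OO_{g,\lambda}[[x_1,\dots,x_r]]$ in $r$ variables. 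The existence of such primes is a Chebotarev argument applied to the Galois group of the compositum of the field fixed by $\ker\overline\rho$ with $\Q(\zeta_{\ell^m})$, and it is exactly here that strong irreducibility is needed: one must know the image of $\overline\rho$ is large enough that every nonzero class in the relevant $H^1$ with coefficients in $\mathrm{ad}^0\overline\rho$ is detected by a Frobenius, and for $\ell=3$ this requires $\overline\rho|_{G_L}$ irreducible (for $\ell>3$ it is automatic by Proposition \ref{strong irreducible}).

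On the automorphic side, raising the level at the primes of $Q_m$ produces a Hecke module $M_{Q_m}$ that is finite and free over the group ring $\OO_{g,\lambda}[\Delta_{Q_m}]$, where $\Delta_{Q_m}\cong(\Z/\ell^m)^r$ is the maximal $\ell$-power quotient of $\prod_{q\in Q_m}(\Z/q)^\times$, compatibly with its $R_{Q_m}$-action, and reducing modulo the augmentation ideal recovers a fixed module $M_\Sigma$ realizing $\T_\Sigma$. A patching argument — a limit over $m$ using finiteness of the relevant quotients — then produces a ring $R_\infty$, a quotient of $\OO_{g,\lambda}[[x_1,\dots,x_r]]$, acting faithfully on a module $M_\infty$ that is finite and free over $S_\infty:=\OO_{g,\lambda}[[y_1,\dots,y_r]]$, with $M_\infty/(y_1,\dots,y_r)M_\infty\cong M_\Sigma$ and $R_\infty/(y_1,\dots,y_r)\cong R_\Sigma$. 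A Krull dimension count finishes it: $M_\infty$ being a faithful $R_\infty$-module that is finite over $S_\infty$ forces $\dim R_\infty\ge r+1$, while $R_\infty$ is a quotient of the $(r+1)$-dimensional domain $\OO_{g,\lambda}[[x_1,\dots,x_r]]$; hence $R_\infty$ equals that power series ring, is regular, is finite free over $S_\infty$ and a complete intersection, and coincides with its image $\T_\infty$ in $\mathrm{End}(M_\infty)$. Since regularity, the complete intersection property and freeness all descend modulo the regular sequence $(y_1,\dots,y_r)$, we obtain $R_\Sigma\xrightarrow{\ \sim\ }\T_\Sigma$ with $R_\Sigma$ a complete intersection.

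The main obstacle is the cohomological bookkeeping behind the Taylor--Wiles primes — showing that adjoining exactly $r$ suitable primes annihilates the dual Selmer group, which is where the image-size hypothesis (strong irreducibility, hence the condition on $\overline\rho|_{G_L}$ when $\ell=3$) is genuinely used — together with checking that the flat local deformation ring at $\ell$ is formally smooth, or at least imposes no extra relation, so that the local term at $\ell$ does not spoil the matching in the numerical criterion; for the latter I would rely on \cite{deShalit} and \cite{DaDiTa}. Everything else is the standard Wiles--Taylor--Wiles machinery, which applies here verbatim since $f$ and $g$ have weight $2$ and trivial nebentypus, $\ell\nmid N_g$, and $\overline\rho$ is absolutely irreducible with $g$ minimal, so all of the original hypotheses are in force.
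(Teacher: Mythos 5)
The paper does not actually reprove Taylor--Wiles: its proof consists of the observation that the theorem is established in \cite{deShalit} and \cite{Diamond1997} under the hypothesis that $\overline\rho|_{G_L}$ is irreducible, together with Proposition \ref{strong irreducible} showing this hypothesis is automatic for $\ell>3$. You instead sketch the whole Wiles/Taylor--Wiles machinery, which is legitimate in principle, but your sketch has a genuine gap exactly where the statement goes beyond the minimal case: the theorem concerns deformations of type $\Sigma$ for an \emph{arbitrary} finite $\Sigma$, and the nonempty case (in this paper, $\Sigma=\{p\}$, i.e.\ the conditions $\QQ'$, where ramification beyond the residual conductor is allowed) is precisely what the application requires.

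The gap is in the dimension bookkeeping of your patching argument, which is the minimal-level argument. If $r$ is the dimension of the Selmer group attached to the type-$\Sigma$ conditions and $Q$ is a set of auxiliary primes annihilating the corresponding dual Selmer group, the Greenberg--Wiles formula gives $\dim H^1_{\Sigma\cup Q}=|Q|+\sum_{q\in\Sigma}\dim H^0(G_q,\mathrm{ad}^0\overline\rho(1))$, not $|Q|$; the extra local terms are nonzero exactly in the interesting situations, namely at primes where level raising is possible (which is the case of the prime $p$ in this paper). Hence $R_{\Sigma\cup Q_m}$ needs strictly more than $|Q_m|$ topological generators, $R_\infty$ is only a quotient of a power series ring in more than $r$ variables while $S_\infty$ has just $r=|Q_m|$ variables, and the final Krull-dimension count no longer forces $R_\infty$ to be regular, so neither the isomorphism nor the complete intersection property follows. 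In the 1995-era technology you invoke (as written up in \cite{deShalit}, \cite{Diamond1997}, \cite{DaDiTa}), patching proves only the minimal case $\Sigma=\varnothing$; the general type-$\Sigma$ statement is then obtained by induction on $\Sigma$ via the numerical criterion, and that induction needs a second, independent ingredient absent from your sketch: a lower bound on the growth of the congruence ideal $\eta_\Sigma$ when a prime is added to $\Sigma$, proved by Ribet-style level raising and Ihara's lemma. Without that step (or, alternatively, a framed local-deformation-ring analysis at the primes of $\Sigma$, which is later machinery and would require its own argument for the complete intersection claim), your proposal does not prove the theorem as stated --- and the nonminimal case is exactly what the paper uses to get $\RR_{\QQ'}\cong\T_{\QQ'}$ in the proof of Theorem \ref{teorema}.
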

\begin{proof}
In \cite{deShalit} and \cite{Diamond1997} this is proved with the
condition $\overline\rho|_{G_L}$ irreducible with
$L=\Q(\sqrt{(-1)^{(\ell-1)/2}\ell})$ and in Proposition \ref{strong
irreducible} we already saw that for $\ell>3$ this condition is
always satisfied.
\end{proof}

\section{Proof of the Theorem}\label{Seccio proof}
We will need first to introduce two auxiliary results.

\begin{proposition}\label{Conditions}
Let $\overline{\rho}$ be a mod $\lambda$ irreducible representation
of conductor $N$, with $\ell>2$. If $\ell=3$, suppose that
$\overline\rho|_{G_L}$ is irreducible. Let us suppose that there
exists only one newform $g$ of weight $2$, trivial character, and
level $N$ such that $\overline{\rho}=\overline{\rho}_{g,\lambda}$.
Let $\QQ$ be the following set of deformation conditions:
\begin{itemize}
\item The deformations are unramified outside $\ell N$.
\item The deformations are minimally ramified everywhere.
\item The determinant of the deformations is the cyclotomic character.
\item The deformations are flat (locally at $\ell$).
\end{itemize}
Then, the deformation ring $\RR_\QQ$ is the ring of integers
$\OO_{g,\lambda}$.
\end{proposition}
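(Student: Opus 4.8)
The plan is to deduce this directly from the Taylor–Wiles Theorem of the previous section, applied with $\Sigma = \emptyset$. First I would observe that the set of deformation conditions $\mathcal{Q}$ is, up to the harmless reformulation of condition (1) (having determinant exactly the cyclotomic character is the ``trivial nebentypus'' version of $\chi_\ell^{-1}\det\rho$ having finite prime-to-$\ell$ order), precisely the type-$\Sigma$ conditions of the Taylor–Wiles setup with $\Sigma = \emptyset$: ``minimally ramified everywhere'' together with ``unramified outside $\ell N$'' is the statement that $\rho$ is minimally ramified outside $\emptyset$ (recall $N = \overline{N}$ is the conductor of $\overline\rho$), and ``flat at $\ell$'' is condition (3). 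Hence $\mathcal{R}_\mathcal{Q} = R_\emptyset$, and by the Taylor–Wiles Theorem the natural surjection $\phi_\emptyset : R_\emptyset \to \mathbb{T}_\emptyset$ is an isomorphism, where $\mathbb{T}_\emptyset \subset \prod_{f \in \Phi_\emptyset} \mathcal{O}_{f,\lambda}$ is the image of $R_\emptyset$.

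The next step is to identify $\Phi_\emptyset$ and hence $\mathbb{T}_\emptyset$. By definition $\Phi_\emptyset$ consists of the newforms $f$ whose attached $\lambda$-adic representation $\rho_{f,\lambda}$ is a deformation of $\overline\rho$ of type $\emptyset$; such an $f$ is a weight-$2$ newform, trivial character, which is minimally ramified everywhere relative to $\overline\rho$ and unramified outside $\ell N$ with $\ell \nmid N$, so its level is exactly the prime-to-$\ell$ conductor of $\overline\rho$, namely $N$, and its residual representation is $\overline\rho$. By the hypothesis of the Proposition there is exactly one such newform, namely $g$ itself (one checks $\rho_{g,\lambda}$ does satisfy the conditions: it is minimal since $N_g = \overline N$, flat at $\ell$ because $g$ has weight $2$ and $\ell \nmid N_g$, and has cyclotomic determinant). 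Therefore $\Phi_\emptyset = \{g\}$ and $\prod_{f\in\Phi_\emptyset}\mathcal{O}_{f,\lambda} = \mathcal{O}_{g,\lambda}$, so $\mathbb{T}_\emptyset$ is a sub-$\mathcal{O}_{g,\lambda}$-algebra of $\mathcal{O}_{g,\lambda}$ containing the image of the structural map; since $R_\emptyset$ is an $\mathcal{O}_{g,\lambda}$-algebra and the map $R_\emptyset \to \mathcal{O}_{g,\lambda}$ attached to $\rho_{g,\lambda}$ is an $\mathcal{O}_{g,\lambda}$-algebra map, its image is all of $\mathcal{O}_{g,\lambda}$. Combining, $\mathcal{R}_\mathcal{Q} = R_\emptyset \cong \mathbb{T}_\emptyset = \mathcal{O}_{g,\lambda}$.

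The only genuine point requiring care — the main obstacle — is the verification that the four bullet-point conditions defining $\mathcal{Q}$ really do coincide with the type-$\emptyset$ conditions from the Taylor–Wiles section, so that no spurious deformations are allowed or excluded; in particular one must confirm that ``minimally ramified everywhere'' at primes dividing $N$ matches the notion of minimal ramification built into the deformation problem used by de Shalit and Diamond, and that imposing the determinant to be exactly cyclotomic (rather than merely finite-order-twist of it) does not shrink the ring — it does not, because in the trivial-nebentypus setting the two coincide. Once this bookkeeping is done, the Proposition is immediate from Taylor–Wiles together with the uniqueness hypothesis on $g$; there is no further deformation-theoretic input needed.
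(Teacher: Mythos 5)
Your proposal is correct and follows essentially the same route as the paper: identify $\QQ$ with the type-$\Sigma$ conditions for $\Sigma=\varnothing$, invoke the Taylor--Wiles isomorphism $R_\varnothing\cong\T_\varnothing$, and use the uniqueness hypothesis on $g$ to see that $\T_\varnothing=\OO_{g,\lambda}$. Your write-up merely fills in details the paper leaves implicit (that $\Phi_\varnothing=\{g\}$ and that the structural $\OO_{g,\lambda}$-algebra map forces $\T_\varnothing$ to be all of $\OO_{g,\lambda}$), which is a faithful elaboration rather than a different argument.
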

\begin{proof}
What we are considering is the problem of deformations of type
$\Sigma = \varnothing$. By the Theorem of Taylor-Wiles, we know that
the universal deformation ring $R_\Sigma$ must be isomorphic to
$\T_{\Sigma}$. By hypothesis, there is only one
$\overline\Q_\ell$-point in $\T_\Sigma$. Then $\RR_\Sigma$ must be
$\OO_{g,\lambda}$ itself.
\end{proof}

\begin{lemma}\label{Only one}
Let $\rho_1$ and $\rho_2$ be two representations, both deforming
$\overline\rho$
\[
\rho_1,\rho_2:\G\rightarrow GL_2(\OO_\lambda/\lambda^n\OO_\lambda)
\]
satisfying the same deformation conditions $\QQ$, such that for
these conditions the universal deformation ring is $\OO_\lambda$.
Then, $\rho_1$ is equivalent to $\rho_2$.
\end{lemma}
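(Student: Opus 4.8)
The plan is to reduce the statement to the universal property of the deformation ring $\RR_\QQ = \OO_\lambda$. First I would observe that each $\rho_i$, being a deformation of $\overline\rho$ to $\OO_\lambda/\lambda^n\OO_\lambda$ that satisfies the deformation conditions $\QQ$, is classified by a local $\OO_\lambda$-algebra homomorphism $\psi_i : \RR_\QQ \to \OO_\lambda/\lambda^n\OO_\lambda$, unique up to the choice of representative in the strict equivalence class of $\rho_i$. (Here I use that $\overline\rho$ is absolutely irreducible, which guarantees the deformation functor is representable and that a deformation is determined, as a strict equivalence class, by the classifying map.) Since by hypothesis $\RR_\QQ = \OO_\lambda$, the map $\psi_i$ is simply the canonical projection $\OO_\lambda \twoheadrightarrow \OO_\lambda/\lambda^n\OO_\lambda$ — there is no other local $\OO_\lambda$-algebra map out of $\OO_\lambda$ into $\OO_\lambda/\lambda^n\OO_\lambda$, because such a map must send $1 \mapsto 1$ and be $\OO_\lambda$-linear, hence is forced.

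Consequently $\psi_1 = \psi_2$, and by the defining property of the universal deformation this means $\rho_1$ and $\rho_2$ are obtained by pushing forward one and the same universal deformation along the same map; therefore they are strictly equivalent, in particular equivalent (conjugate) as representations into $GL_2(\OO_\lambda/\lambda^n\OO_\lambda)$. I would spell this out by letting $\rho^{\mathrm{univ}} : \G \to GL_2(\RR_\QQ)$ be the universal deformation; then $\rho_i$ is conjugate to $\psi_i \circ \rho^{\mathrm{univ}}$, and since $\psi_1 = \psi_2$ we get that $\rho_1$ is conjugate to $\rho_2$.

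The only genuine point requiring care — and the step I expect to be the main obstacle to state cleanly rather than to prove — is the passage from "same classifying map" to "conjugate representations": strict equivalence is conjugation by a matrix reducing to the identity mod $\lambda$, so a priori one gets conjugacy by such a matrix, which is certainly conjugacy in $GL_2$. I would make sure the hypotheses in force (absolute irreducibility of $\overline\rho$, which holds since $\overline\rho_{g,\lambda}$ is irreducible and odd, hence absolutely irreducible for $\ell>2$) are invoked so that the deformation problem with fixed determinant is representable and the above classification is literally applicable. With that in place the argument is immediate, and no computation is needed beyond the observation that $\mathrm{Hom}_{\OO_\lambda\text{-alg}}(\OO_\lambda, \OO_\lambda/\lambda^n\OO_\lambda)$ is a single point.
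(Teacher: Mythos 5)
Your proposal is correct and follows essentially the same route as the paper: both arguments classify each $\rho_i$ by a homomorphism $\RR_\QQ=\OO_\lambda\rightarrow\OO_\lambda/\lambda^n\OO_\lambda$ via the universal deformation and then use that there is only one such map (the canonical projection), so the two deformations coincide. The only difference is presentational --- you argue directly and are slightly more careful about representability and strict equivalence, while the paper phrases the same point as a proof by contradiction.
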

\begin{proof}
We suppose they are different. The universal deformation (under
conditions $\QQ$) is
\[
\rho^{univ}:\G\rightarrow GL_2(\OO_\lambda).
\]
Then, we have that there exist two homomorphisms $h_1$ and $h_2$
\[
h_1,h_2:\OO_\lambda\rightarrow \OO_\lambda/\lambda^n\OO_\lambda
\]
such that they induce the identity in the residue fields and also
$h_i\circ\rho^{univ}=\rho_i$. Then $h_1$ and $h_2$ must be different
homomorphisms, but since there exists only one natural projection
from $\OO_\lambda$ to $\OO_\lambda/\lambda^n\OO_\lambda$ fixing the
residue fields, we arrive at a contradiction.
\end{proof}

\begin{proof}[Proof of Theorem \ref{teorema}]
We consider the same set of deformation conditions $\QQ$ as in
Proposition \ref{Conditions}, with $N=N_g$. We consider also the set
of conditions $\QQ'$ as follows:
\begin{itemize}
\item The deformations are unramified outside $\ell pN_g$.
\item The deformations are minimally ramified locally at every place $q \neq p$.
\item The determinant of the deformations is the cyclotomic
character.
\item The deformations are flat locally at $\ell$.
\end{itemize}
So, the set of conditions $\QQ'$ is different from the set of
conditions $\QQ$ only because now we allow ramification at $p$.

By Carayol's result, we know that all such deformations must be in
level $p^kN_g$ with $k\leq2$. Then, by Taylor-Wiles $\RR_{\QQ'}$ is
isomorphic to a Hecke algebra $\T_{\QQ'}$ of level $p^2 N_g$.

Obviously $\overline\rho_{g,\lambda^{m-1}}$ and
$\overline\rho_{g,\lambda^{m}}$ satisfy conditions $\QQ$ and $\QQ'$.
Since
$\overline\rho_{f,\lambda^{m-1}}\sim\overline\rho_{g,\lambda^{m-1}}$,
$\overline\rho_{f,\lambda^{m-1}}$ satisfies also $\QQ$ and $\QQ'$.

By Proposition \ref{Conditions}, $\RR_{\QQ}=\OO_{g,\lambda}$. This
means, by Lemma \ref{Only one}, that if two mod $\lambda^n$
deformations satisfy deformation conditions $\QQ$ they must be the
same. By hypothesis we know that
$\overline\rho_{f,\lambda^{m}}\not\sim\overline\rho_{g,\lambda^{m}}$.
This means that $\overline\rho_{f,\lambda^{m}}$ can not satisfy
conditions $\QQ$. However, $\overline\rho_{f,\lambda^{m}}$ clearly
satisfies conditions $\QQ'$. Since the only difference between both
conditions is the ramification at $p$, the reason for
$\overline\rho_{f,\lambda^{m}}$ not to satisfy $\QQ$ must be
precisely that $\overline\rho_{f,\lambda^{m}}$ ramifies at $p$, as
we wanted to prove.
\end{proof}

\section{Further work}
It would be interesting to improve the main result by relaxing the
assumptions. For example, one should consider in which cases it is
possible to eliminate the condition ``for any other $h\in S_2(N_g),\
\overline\rho_{g,\lambda}\not\sim\overline\rho_{h,\lambda}$" in the
main theorem. In this more general case, the minimal universal
deformation ring will be more complicated, though it is known to be
finite flat complete intersections by the result of Taylor-Wiles.

Looking at Table \ref{table:examples} we saw that $\ell$ and $p$
seem not to be bounded ($p$ is clear). However, we wonder if given
any couple of newforms there is any global bound for $m$.

\end{document}